\title{Generators of invariant linear system on tropical curves for finite isometry group}
\author{Song JuAe}
\date{}
\newtheorem{dfn}{Definition}[subsection] 
\newtheorem{lemma}[dfn]{Lemma}
\newtheorem{rem}[dfn]{Remark}
\newtheorem{dfn3}{Definition}[section] 
\newtheorem{thm3}[dfn3]{Theorem}
\newtheorem{cor3}[dfn3]{Corollary}
\newtheorem{lemma3}[dfn3]{Lemma}
\newtheorem{rem3}[dfn3]{Remark}
\newtheorem{ex3}[dfn3]{Example}
\def\Gamma{\varGamma}
\begin{document}
\maketitle 
\begin{abstract} 
For a tropical curve $\Gamma$ and a finite subgroup $K$ of the isometry group of $\Gamma$, we prove, extending the work by Haase, Musiker and Yu (\cite{Haase=Musiker=Yu}), that the $K$-invariant part of the complete linear system associated to a $K$-invariant effective divisor on $\Gamma$ is finitely generated. 
\end{abstract}


\section{Introduction} 

Let $R(D)$ denote the set consisting of rational functions corresponding to the complete linear system $|D|$ for an effective divisor $D$ on a tropical curve $\Gamma$, where a tropical curve means a metric graph possibly with unbounded edges.
$R(D)$ becomes a tropical semimodule.
The projective space $R(D) / \boldsymbol{R}$ is naturally identified with the complete linear system $|D|$.
Haase, Musiker and Yu showed that $R(D)$ is finitely generated ({\cite[Theorem 6]{Haase=Musiker=Yu}}).
A tropical subsemimodule $R^{\prime}$ of $R(D)$ corresponds to a linear subspace $\Lambda$ of $|D|$.
This linear subspace $\Lambda$ is called a linear system associated to $R^{\prime}$.

In this paper, we recall some basic facts of tropical curves in Section 2.
Then in Section 3, we observe the $K$-invariant set $R(D)^K$ of $R(D)$ and prove that $R(D)^K$ is actually finitely generated, where $K$ is a finite subgroup of the isometry group of $\Gamma$.
Our proof is basically analogous to that of \cite{Haase=Musiker=Yu}, but it is not perfectly compatible, i.e. the $K$-invariant set $S^K$ of the generator set $S$ of $R(D)$ defined in {\cite[Lemma 6]{Haase=Musiker=Yu}} is not a generator set of $R(D)^K$.
We find such a set corresponding to $S$, which we call $S(D)_K$.
The condition defining $S(D)_K$ is tangibly given from geometric information.
Also, the construction of a harmonic morphism with degree $|K|$ from $\Gamma$ to the quotient tropical curve $\Gamma^{\prime}$ of $\Gamma$ by $K$ precedes.
We follow Chan's natural construction (\cite{Chan}) with a little bit of adaptation.
Finally, using the harmonic morphism, we prove that $R(D)^K$ is finitely generated as a tropical semimodule in our main theorem (Theorem \ref{main theorem}).
When $D$ is $K$-invariant, we can identify $R(D)^K/\boldsymbol{R}$ with the $K$-invariant linear subsystem $|D|^K$ and then $|D|^K$ is finitely generated by $S(D)_K/\boldsymbol{R}$.

\section{Preliminaries}　

In this section, we briefly recall the theories of tropical curves (\cite{Kawaguchi=Yamaki}), divisors on tropical curves (\cite{Kawaguchi=Yamaki}), harmonic morphisms of tropical curves (\cite{Chan}, \cite{Haase=Musiker=Yu}, \cite{Kageyama}), and chip-firing moves on tropical curves (\cite{Haase=Musiker=Yu}), which we need later.

\subsection{Tropical curves} 

In this paper, a {\it graph} means an unweighted, finite connected nonempty multigraph.
Note that we allow the existence of loops.
For a graph $G$, the sets of vertices and edges are denoted by $V(G)$ and $E(G)$, respectively.
The {\it valence} ${\rm val}(v)$ of a vertex $v$ of $G$ is the number of edges emanating from $v$, where we count each loop as two.
A vertex $v$ of $G$ is a {\it leaf end} if $v$ has valence one.
A {\it leaf edge} is an edge of $G$ having a leaf end.

An {\it edge-weighted graph} $(G, l)$ is the pair of a graph $G$ and a function $l: E(G) \rightarrow {\boldsymbol{R}}_{>0} \cup \{\infty\}$ called a {\it length function}, where $l$ can take the value $\infty$ on only leaf edges.
A {\it tropical curve} is the underlying $\infty$-metric space of an edge-weighted graph $(G, l)$.
For a point $x$ on a tropical curve $\Gamma$ obtained from $(G, l)$, if the distances between $x$ and all points on $\Gamma$ other than $x$ are infinity, then $x$ is called a {\it point at infinity}, else, $x$ is said to be a {\it finite point}.
For the above tropical curve $\Gamma$, $(G, l)$ is said to be its {\it model}.
There are many possible models for $\Gamma$.
We construct a model $(G_{\circ}, l_{\circ})$ called the {\it canonical model} of $\Gamma$ as follows:
when $\Gamma$ is a circle, we determine $V(G_{\circ})$ as the set consisting of one arbitrary point on $\Gamma$,
else when $\Gamma$ is the $\infty$-metric space obtained from only one edge with length of $\infty$, $V(G_{\circ})$ consists of the two endpoints of $\Gamma$ (those are points at infinity) and an any point on $\Gamma$ as the origin,
else, generally, we determine $V(G_{\circ}):= \{ x \in \Gamma~|~{\rm val}(x) \neq 2 \}$, where the valence ${\rm val}(x)$ is the number of connected components of $U_x \setminus \{ x \}$ with $U_x$ being any sufficiently small connected neighborhood of $x$ in $\Gamma$.
Since connected components of $\Gamma \setminus V(G_{\circ})$ consist of open intervals, whose lengths determine the length function $l_{\circ}$.
If a model $(G, l)$ of $\Gamma$ has no loops, then $(G, l)$ is said to be a {\it loopless model} of $\Gamma$.
For a model $(G, l)$ of $\Gamma$, the loopless model for $(G, l)$ is obtained by regarding all midpoints of loops of $G$ as vertices and by adding them to the set of vertices of $G$.
The loopless model for the canonical model of a tropical curve is called the {\it canonical loopless model}.

For terminology, in a tropical curve $\Gamma$, an edge of $\Gamma$ means an edge of the underlying graph $G_{\circ}$ of the canonical model $(G_{\circ}, l_{\circ})$.
Let $e$ be an edge of $\Gamma$ which is not a loop.
We regard $e$ as a closed subset of $\Gamma$, i.e., including the endpoints $v_1, v_2$ of $e$.
The {\it relative interior} of $e$ is $e^{\circ} = e \setminus \{ v_1, v_2 \}$.
For a point $x$ on $\Gamma$, a {\it half-edge} of $x$ is a connected component of $U_x \setminus \{ x \}$ with any sufficiently small connected neighborhood $U_x$ of $x$.

For a model $(G, l)$ of a tropical curve $\Gamma$, we frequently identify a vertex $v$ (resp. an edge $e$) of $G$ with the point corresponding to $v$ on $\Gamma$ (resp. the closed subset corresponding to $e$ of $\Gamma$).


\subsection{Divisors on tropical curves}　

Let $\Gamma$ be a tropical curve.
An element of the free abelian group ${\rm Div}(\Gamma)$ generated by points on $\Gamma$ is called a {\it divisor} on $\Gamma$.
For a divisor $D$ on $\Gamma$, its {\it degree} ${\rm deg}(D)$ is defined by the sum of the coefficients over all points on $\Gamma$.
We write the coefficient at $x$ as $D(x)$.
A divisor $D$ on $\Gamma$ is said to be {\it effective} if $D(x) \ge 0$ for any $x$ in $\Gamma$. If $D$ is effective, we write simply $D \ge 0$.
The set of points on $\Gamma$ where the coefficient(s) of $D$ is not zero is called the {\it support} of $D$ and written as ${\rm supp}(D)$.

A {\it rational function} on $\Gamma$ is a constant function of $-\infty$ or a piecewise linear function with integer slopes and with a finite number of pieces, taking the value $\pm \infty$ only at points at infinity.
${\rm Rat}(\Gamma)$ denotes the set of rational functions on $\Gamma$.
For a point $x$ on $\Gamma$ and $f$ in ${\rm Rat}(\Gamma)$ which is not constant $-\infty$, the sum of the outgoing slopes of $f$ at $x$ is denoted by ${\rm ord}_x(f)$.
If $x$ is a point at infinity and $f$ is infinite there, we define ${\rm ord}_x(f)$ as the outgoing slope from any sufficiently small connected neighborhood of $x$.
Note when $\Gamma$ is a singleton, for any $f$ in ${\rm Rat}(\Gamma)$, we define ${\rm ord}_x(f) := 0$.
This sum is $0$ for all but finite number of points on $\Gamma$, and thus
\[
{\rm div}(f):=\sum_{x \in \Gamma}{\rm ord}_x(f) \cdot x
\]
is a divisor on $\Gamma$, which is called a {\it principal divisor}.
Two divisors $D$ and $E$ on $\Gamma$ are said to be {\it linearly equivalent} if $D-E$ is a principal divisor.
We handle the values $\infty$ and $-\infty$ as follows:
let $f, g$ in ${\rm Rat}(\Gamma)$ take the value $\infty$ and $-\infty$ at a point $x$ at infinity on $\Gamma$ respectively,
and $y$ be any point in any sufficiently small neighborhood of $x$.
When ${\rm ord}_x(f) + {\rm ord}_x(g)$ is negative, then $(f \odot g)(x) := \infty$.
When ${\rm ord}_x(f) + {\rm ord}_x(g)$ is positive, then $(f \odot g)(x) := -\infty$.
Remark that the constant function of $-\infty$ on $\Gamma$ dose not determine a principal divisor.
For a divisor $D$ on $\Gamma$, the {\it complete linear system} $|D|$ is defined by the set of effective divisors on $\Gamma$ being linearly equivalent to $D$.

The set of $\boldsymbol{R}$ with two tropical operations:
\begin{center}
$a \oplus b := {\rm max}\{ a, b \}$~~~and~~~$a \odot b := a + b$
\end{center}
becomes a semiring called the {\it tropical semiring}, where both $a$ and $b$ are in $\boldsymbol{R}$.
For a divisor $D$ on a tropical curve, let $R(D)$ be the set of rational functions $f \ne -\infty$ such that $D + {\rm div}(f)$ is effective.
When ${\rm deg}(D)$ is negative, $|D|$ is empty, so is $R(D)$.
Otherwise, from the argument in Section $3$ of \cite{Haase=Musiker=Yu}, $D$ is not empty and consequently so is $R(D)$.
Hereafter, we treat only divisors of nonnegative degree.

\begin{lemma}[cf. {\cite[Lemma 4]{Haase=Musiker=Yu}}]
	\label{R(D)はトロピカル半加群}
$R(D)$ becomes a tropical semimodule on $\boldsymbol{R}$ by extending above tropical operations onto functions, giving pointwise sum and product.
\end{lemma}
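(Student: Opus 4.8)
The plan is to verify two things only: that $R(D)$ is closed under the pointwise operations $f \oplus g = \max\{f,g\}$ and $a \odot f = a + f$, and that the resulting structure satisfies the semimodule axioms. Since the operations on functions are defined pointwise from the operations $\oplus = \max$ and $\odot = +$ on $\boldsymbol{R}$, the algebraic axioms (commutativity and associativity of $\oplus$, the compatibilities $a \odot (b \odot f) = (a \odot b) \odot f$, the two distributive laws, and $0 \odot f = f$) are all inherited pointwise from the tropical semiring structure on $\boldsymbol{R}$. Hence I would spend no effort on these and instead concentrate the proof on closure.

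Closure under scalar multiplication is immediate: for $a \in \boldsymbol{R}$ and $f \in R(D)$, the function $a \odot f = a + f$ differs from $f$ by a constant, so it has the same outgoing slope at every point and ${\rm ord}_x(a \odot f) = {\rm ord}_x(f)$ for all $x$. Therefore ${\rm div}(a \odot f) = {\rm div}(f)$, and $D + {\rm div}(a \odot f) = D + {\rm div}(f) \ge 0$, so $a \odot f \in R(D)$.

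The main step is closure under $\oplus$. First I would check that $h := f \oplus g = \max\{f,g\}$ is again a rational function: the pointwise maximum of two piecewise linear functions with integer slopes and finitely many pieces is of the same type, and $h$ takes the values $\pm\infty$ only at points at infinity because $f$ and $g$ do. Next I would establish the key local inequality ${\rm ord}_x(h) \ge \min\{ {\rm ord}_x(f), {\rm ord}_x(g) \}$ at every point $x$ of $\Gamma$ by a case analysis on $f(x) > g(x)$, $f(x) < g(x)$, and $f(x) = g(x)$. In the first two cases one of $f, g$ dominates $h$ on a neighborhood of $x$, so ${\rm ord}_x(h)$ equals the corresponding order; in the equality case, along each half-edge of $x$ the outgoing slope of $h$ is the maximum of the outgoing slopes of $f$ and $g$, whence ${\rm ord}_x(h) \ge \max\{ {\rm ord}_x(f), {\rm ord}_x(g)\}$. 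Granting this inequality, from $D(x) + {\rm ord}_x(f) \ge 0$ and $D(x) + {\rm ord}_x(g) \ge 0$ I obtain $D(x) + {\rm ord}_x(h) \ge \min\{ D(x) + {\rm ord}_x(f),\, D(x) + {\rm ord}_x(g) \} \ge 0$ for every $x$, that is $D + {\rm div}(h) \ge 0$, so $h \in R(D)$.

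The step I expect to be most delicate is the local slope computation, especially the bookkeeping at points at infinity, where $f$ and $g$ may take the values $\pm\infty$ and where ${\rm ord}_x$ is defined through the outgoing slope from a sufficiently small neighborhood. There one must respect the sign conventions recorded just before the statement and treat the several combinations of finite and infinite values of $f(x)$ and $g(x)$ separately, in order to confirm that the same inequality ${\rm ord}_x(h) \ge \min\{ {\rm ord}_x(f), {\rm ord}_x(g)\}$ persists at such points.
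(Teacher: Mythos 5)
Your proposal is correct and is essentially the paper's own approach: the paper proves this lemma simply by invoking the argument of Lemma 4 of Haase--Musiker--Yu, remarking that the definition of ${\rm ord}_x$ at points at infinity is what makes that argument carry over to curves with unbounded edges. Your argument (closure under $a\odot f$ since the divisor is unchanged, closure under $f\oplus g$ via the pointwise inequality ${\rm ord}_x(f\oplus g)\ge\min\{{\rm ord}_x(f),{\rm ord}_x(g)\}$, with the axioms inherited pointwise) is exactly that argument, and the case analysis at points at infinity you flag as delicate is precisely the adaptation the paper points to.
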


By the definition of ${\rm ord}_x(f)$ for a point $x$ at infinity and $f$ in ${\rm Rat}(\Gamma)$, we can prove Lemma \ref{R(D)はトロピカル半加群} in the same way of {\cite[Lemma 4]{Haase=Musiker=Yu}}.



For a tropical subsemimodule $M$ of $(\boldsymbol{R} \cup \{ \pm \infty \})^{\Gamma}$ (or of $\boldsymbol{R}^{\Gamma}$), $f$ in $M$ is called an {\it extremal of} $M$ when it implies $f = g_1$ or $f = g_2$ that any $g_1$ and $g_2$ in $M$ satisfies $f = g_1 \oplus g_2$.

\begin{rem}[{\cite[Proposition 8]{Haase=Musiker=Yu}}]
Any finitely generated tropical subsemimodule $\widetilde{M}$ of $\boldsymbol{R}^{\Gamma}$ is generated by the extremals of $\widetilde{M}$.
\end{rem}

With the adaptation for $\pm \infty$, we can prove the following lemma in same way as the above remark.

\begin{lemma}
	\label{有限生成トロピカル半加群はextremalで生成される}
Any finitely generated tropical subsemimodule $M$ of $R(D) \subset (\boldsymbol{R} \cup \{ \pm \infty \})^{\Gamma}$ is generated by the extremals of $M$.
\end{lemma}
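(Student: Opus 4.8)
The plan is to follow the proof of \cite[Proposition 8]{Haase=Musiker=Yu} recalled in the remark above, the only genuinely new feature being the values $\pm\infty$ that elements of $R(D)$ may take at the finitely many points at infinity of $\Gamma$. Fix a finite generating set $\{g_1,\dots,g_n\}$ of $M$. Since $M$ is a tropical subsemimodule over $\boldsymbol{R}$, every element of $M$ can be written as a tropical combination $\bigoplus_{i\in I}(a_i\odot g_i)=\max_{i\in I}(a_i+g_i)$ for some nonempty $I\subseteq\{1,\dots,n\}$ and scalars $a_i\in\boldsymbol{R}$, where the pointwise sums are evaluated with the conventions $a+\infty=\infty$ and $a+(-\infty)=-\infty$.

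First I would show that every extremal of $M$ is a tropical scalar multiple $a\odot g_i$ of some generator. Writing an extremal $f$ as above with $|I|\ge 2$, I would split off one term to get $f=(a_{i_0}\odot g_{i_0})\oplus\bigl(\bigoplus_{i\in I\setminus\{i_0\}}a_i\odot g_i\bigr)$, a decomposition of $f$ into two elements of $M$; the defining property of an extremal then forces $f$ to equal one of the two, and an induction on $|I|$ yields $f=a_i\odot g_i$ for a single $i$. In particular $M$ has, up to the $\boldsymbol{R}$-action, only finitely many extremals, each carried by one of the $g_i$.

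Next I would prove the converse, that the extremals generate $M$. As in the cited proof I would first replace $\{g_1,\dots,g_n\}$ by an irredundant generating set, repeatedly discarding any generator lying in the subsemimodule generated by the remaining ones; finiteness guarantees termination and leaves generators $e_1,\dots,e_k$, none a tropical combination of the others. I would then verify that each $e_j$ is extremal: given $e_j=h_1\oplus h_2$ with $h_1,h_2\in M$ one has $h_1\le e_j$ and $h_2\le e_j$ pointwise, and at every point one of $h_1,h_2$ attains the value of $e_j$; expanding $h_1$ and $h_2$ in the $e_\ell$ and using that no $e_\ell$ is a tropical multiple of $e_j$, one concludes $h_1=e_j$ or $h_2=e_j$. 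Combined with the previous step, this identifies the extremals precisely with the tropical multiples of $e_1,\dots,e_k$, which generate $M$.

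The one point requiring care — and the only place the argument departs from \cite{Haase=Musiker=Yu} — is the behaviour at the points at infinity, where the functions may equal $\pm\infty$. There the join $\oplus=\max$ extends unambiguously by $\max\{a,+\infty\}=+\infty$ and $\max\{a,-\infty\}=a$, the scaling $a\odot(\pm\infty)=\pm\infty$ is harmless, and the product is governed by ${\rm ord}_x$ together with the conventions for $f\odot g$ fixed earlier in this section. Since $\Gamma$ has only finitely many points at infinity and an element of $R(D)$ is infinite only there, all the pointwise comparisons and the extremal/irredundancy bookkeeping above remain finite and internally consistent, so the proof of the remark transports verbatim. Accordingly I expect the main (and essentially only) obstacle to be checking this consistency of the $\pm\infty$ values, the structural core of the argument being inherited unchanged from \cite{Haase=Musiker=Yu}.
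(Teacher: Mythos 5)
Your proposal is correct and takes essentially the same route as the paper: the paper's proof of this lemma is nothing more than the remark that the argument of \cite[Proposition 8]{Haase=Musiker=Yu} carries over with the adaptation for $\pm\infty$, and your sketch (pass to an irredundant generating set, show each irredundant generator is extremal, and show every extremal is a tropical scalar multiple of a generator) is exactly that argument. Two small points of precision: in the extremality check the property to invoke is that $e_j$ is not a tropical combination of the remaining $e_\ell$ (your irredundancy), not merely that no $e_\ell$ is a tropical multiple of $e_j$, and agreement of the relevant combinations at the finitely many points at infinity follows from continuity of rational functions along unbounded edges.
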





\subsection{Harmonic morphisms}　

Let $\Gamma, \Gamma^{\prime}$ be tropical curves, respectively, and $\varphi : \Gamma \rightarrow \Gamma^{\prime}$ be a continuous map.
The map $\varphi$ is called a {\it morphism} if there exist a model $(G, l)$ of $\Gamma$ and a model $(G^{\prime}, l^{\prime})$ of $\Gamma^{\prime}$ such that the image of the set of vertices of $G$ by $\varphi$ is a subset of the set of vertices of $G^{\prime}$, the inverse image of the relative interior of any edge of $G^{\prime}$ by $\varphi$ is the union of the relative interiors of a finite number of edges of $G$ and the restriction of $\varphi$ to any edge $e$ of $G$ is a dilation by some non-negative integer factor ${\rm deg}_e(\varphi)$.
Note that the dilation factor on $e$ with ${\rm deg}_e(\varphi) \ne 0$ represents the ratio of the distance of the images of any two points $x$ and $y$ except points at infinity on $e$ to that of original $x$ and $y$.
If an edge $e$ is mapped to a vertex of $G^{\prime}$ by $\varphi$, then ${\rm deg}_e(\varphi) = 0$.
The morphism $\varphi$ is said to be {\it finite} if ${\rm deg}_e(\varphi) > 0$ for any edge $e$ of $G$.
For any half-edge $h$ of any point on $\Gamma$, we define ${\rm deg}_h(\varphi)$ as ${\rm deg}_e(\varphi)$, where $e$ is the edge of $G$ containing $h$.

Let $\Gamma^{\prime}$ be not a singleton and $x$ a point on $\Gamma$.
The morphism $\varphi$ is {\it harmonic at} $x$ if the number
\[
{\rm deg}_x(\varphi) := \sum_{h \mapsto h^{\prime}}{\rm deg}_h(\varphi)
\]
is independent of the choice of half-edge $h^{\prime}$ emanating from $\varphi(x)$, where $h$ is a connected component of the inverse image of $h^{\prime}$ by $\varphi$. The morphism $\varphi$ is {\it harmonic} if it is harmonic at all points on $\Gamma$.
One can check that if $\varphi$ is a finite harmonic morphism, then the number
\[
{\rm deg}(\varphi) := \sum_{x \mapsto x^{\prime}}{\rm deg}_x(\varphi)
\]
is independent of the choice of a point $x^{\prime}$ on $\Gamma^{\prime}$, and is said the {\it degree} of $\varphi$, where $x$ is an element of the inverse image of $x^{\prime}$ by $\varphi$.
If $\Gamma^{\prime}$ is a singleton and $\Gamma$ is not a singleton, for any point $x$ on $\Gamma$, 
we define ${\rm deg}_x(\varphi)$ as zero so that we regard $\varphi$ as a harmonic morphism of degree zero.
If both $\Gamma$ and $\Gamma^{\prime}$ are singletons, we regard $\varphi$ as a harmonic morphism which can have any number of degree.

Let $\varphi : \Gamma \rightarrow \Gamma^{\prime}$ be a harmonic morphism between tropical curves.
For $f$ in ${\rm Rat}(\Gamma)$, the {\it push-forward} of $f$ is the function $\varphi_\ast f: \Gamma^{\prime} \rightarrow \boldsymbol{R} \cup \{ \pm \infty \}$ defined by 
\[
\varphi_\ast f(x^{\prime}) := \sum_{\substack{x \in \Gamma \\ \varphi(x) = x^{\prime}}} {\rm deg}_x(\varphi) \cdot f(x).
\]
The {\it pull-back} of $f^{\prime}$ in ${\rm Rat}(\Gamma^{\prime})$ is the function $\varphi^{\ast}f^{\prime} : \Gamma \rightarrow \boldsymbol{R} \cup \{ \pm \infty \}$ defined by $\varphi_{\ast}f^{\prime} := f^{\prime} \circ \varphi$.
We define the {\it push-forward} on divisors $\varphi_\ast : {\rm Div}(\Gamma) \rightarrow {\rm Div}(\Gamma^{\prime})$ by 
\[
\varphi_\ast (D) := \sum_{x \in \Gamma}D(x) \cdot \varphi(x).
\]
One can check that ${\rm deg}(\varphi_\ast(D)) = {\rm deg}(D)$ and $\varphi_\ast ({\rm div}(f)) = {\rm div}(\varphi_\ast f)$ for any divisor $D$ on $\Gamma$ and any $f$ in ${\rm Rat}(\Gamma)$ (cf. \cite[Proposition 4.2]{Baker=Norine}).

\subsection{Chip-firing moves}　

In \cite{Haase=Musiker=Yu}, Haase, Musiker and Yu used the term {\it subgraph} of a tropical curve as a compact subset of the tropical curve with a finite number of connected components and  defined the {\it chip firing move} ${\rm CF}(\widetilde{\Gamma_1}, l)$ by a subgraph $\widetilde{\Gamma_1}$ of a tropical curve $\widetilde{\Gamma}$ and a positive real number $l$ as the rational function ${\rm CF}(\widetilde{\Gamma_1}, l)(x) := - {\rm min}(l, {\rm dist}(x, \widetilde{\Gamma_1}))$, where ${\rm dist}(x, \widetilde{\Gamma_1})$ is the infimum of the lengths of the shortest path to arbitrary points on $\widetilde{\Gamma_1}$ from $x$.
They proved that every rational function on a tropical curve is an (ordinary) sum of chip firing moves (plus a constant) ({\cite[Lemma 2]{Haase=Musiker=Yu}}) with the concept of a {\it weighted chip firing move}.
This is a rational function on a tropical curve having two disjoint proper subgraphs $\widetilde{\Gamma_1}$ and $\widetilde{\Gamma_2}$ such that the complement of the union of $\widetilde{\Gamma_1}$ and $\widetilde{\Gamma_2}$ in $\widetilde{\Gamma}$ consists only of open line segments and such that the rational function is constant on $\widetilde{\Gamma_1}$ and $\widetilde{\Gamma_2}$ and linear (smooth) with integer slopes on the complement.
A weighted chip firing move is an (ordinary) sum of chip firing moves (plus a constant) ({\cite[Lemma 1]{Haase=Musiker=Yu}}).

With unbounded edges, their definition of chip firing moves needs a little correction.
Let $\Gamma_1$ be a subgraph of a tropical curve $\Gamma$ which does not have any connected components consisting only of points at infinity and $l$ a positive real number or infinity.
The {\it chip firing move} by $\Gamma_1$ and $l$ is defined as the rational function ${\rm CF}(\Gamma_1, l)(x) := - {\rm min}(l, {\rm dist}(x, \Gamma_1))$.

\begin{lemma}
A weighted chip firing move on a tropical curve is a linear combination of chip firing moves having integer coefficients (plus a constant).
\end{lemma}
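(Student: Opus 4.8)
The plan is to follow the argument of \cite[Lemma 1]{Haase=Musiker=Yu}, adapting it to the presence of unbounded edges. Let $F$ be a weighted chip firing move with the two disjoint subgraphs $\Gamma_1$ and $\Gamma_2$, on which $F$ is constant with values $c_1$ and $c_2$. After subtracting the constant $c_2$ and, if necessary, interchanging the roles of $\Gamma_1$ and $\Gamma_2$, I may assume that $F$ equals $\delta := c_1 - c_2 \ge 0$ on $\Gamma_1$ and $0$ on $\Gamma_2$. First I would classify the open line segments forming the complement of $\Gamma_1 \cup \Gamma_2$: since $F$ is continuous and affine with integer slope on each such segment, and the closure of the segment meets $\Gamma_1 \cup \Gamma_2$ at its endpoints, a segment joining two points of the same $\Gamma_i$ is forced to be constant (and may be ignored), while a segment joining $\Gamma_1$ to $\Gamma_2$ carries a positive integer slope $m$ and length $\ell$ with $m\ell = \delta$. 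Thus the whole content of $F$ is recorded by the connecting segments $s_1, \dots, s_n$ together with their slopes $m_1, \dots, m_n$.

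The core step is an explicit ``onion'' decomposition that matches these slopes one unit at a time. For each integer $k$ with $1 \le k \le \max_i m_i$ I would form the augmented subgraph
\[
\Gamma_1^{(k)} := \Gamma_1 \cup \bigcup_{i :\, m_i < k} s_i,
\]
obtained from $\Gamma_1$ by adjoining every connecting segment whose slope is already exhausted at level $k$, and then take a chip firing move ${\rm CF}(\Gamma_1^{(k)}, l_k)$ whose radius $l_k$ is chosen to sweep the full length of each remaining segment (those with $m_i \ge k$) while stopping at $\Gamma_2$. Such a move contributes slope $1$ to exactly the segments with $m_i \ge k$ and slope $0$ to the shielded ones, so summing over $k$ deposits total slope $m_i$ on each $s_i$; comparing values then shows that $F - c_1$ equals the negative of this integer sum of chip firing moves, which is the desired expression. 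Already in the two--segment case one sees the mechanism: the shielding terms need not be individually constant on $\Gamma_2$, yet their contributions add up to a constant there, and I would record this cancellation as the verification that the total is a genuine rational function matching $F$.

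The main obstacle is that ${\rm dist}(x, \Gamma_1^{(k)})$ may be realized by a path leaving the segment $s_i$ and returning to $\Gamma_1^{(k)}$ through the rest of $\Gamma$, so that a single move need not have the naively expected slope along $s_i$; handling this cleanly is best done by an induction on the number of connecting segments, peeling off one slope level at a time and re-checking the values on $\Gamma_2$, exactly as in \cite{Haase=Musiker=Yu}. The genuinely new point, and the step I expect to require the most care, is the case of an unbounded connecting segment: when $s_i$ is an infinite leaf edge ending at a point at infinity, where $F$ may take the value $\pm\infty$, the matching move must be taken with $l_k = \infty$, and one must verify, using the conventions for ${\rm ord}_x$ and for the values $\pm\infty$ at points at infinity recorded in Section 2.2, that ${\rm CF}(\cdot,\infty)$ reproduces the correct outgoing slope there and that every firing subgraph $\Gamma_1^{(k)}$ still satisfies the admissibility condition that no connected component consists only of points at infinity. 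This $l=\infty$ verification is precisely the correction that the unbounded setting forces upon the argument of \cite[Lemma 1]{Haase=Musiker=Yu}.
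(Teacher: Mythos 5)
There is a genuine gap, and it sits exactly at the point where your proof defers to a ``verification'': the case in which one of the two subgraphs of the weighted chip firing move consists only of points at infinity. Under the paper's corrected definition, a chip firing move may only be fired from a subgraph having no connected component that consists solely of points at infinity, and your onion subgraphs do not escape this restriction: since every connecting segment has slope $m_i \ge 1$, you always have $\Gamma_1^{(1)} = \Gamma_1$, so if $\Gamma_1$ is, say, a single point at infinity, the very first move ${\rm CF}(\Gamma_1^{(1)}, l_1)$ is simply undefined, and no choice of radii repairs this. Worse, the failure is not merely one of admissibility bookkeeping: with your normalization ($F = 0$ on $\Gamma_2$), $F$ equals $+\infty$ on $\Gamma_1$, while every chip firing move satisfies ${\rm CF}(\cdot,\cdot) \le 0$; hence any combination of chip firing moves with \emph{nonnegative} coefficients plus a real constant is bounded above by that constant and can never equal $F$. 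So your scheme of adding one unit-coefficient move per slope level, always fired from the (augmented) $\Gamma_1$ side, cannot produce the required function in this case: the admissibility condition you promise to check is false, not merely delicate. (A smaller symptom of the same problem: your normalization ``subtract $c_2$'' is meaningless when both subgraphs are points at infinity, where $c_1 = +\infty$ and $c_2 = -\infty$.)

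This missing case is precisely the content of the paper's proof, which cites Haase--Musiker--Yu for everything else and concentrates on the situation where $\Gamma_1$ consists only of points at infinity; it is also the reason the lemma is stated with ``linear combination \dots having integer coefficients'' rather than HMY's ``ordinary sum'': one must fire from the \emph{opposite} side, with negative coefficients. Concretely, when $\Gamma_1$ is one point at infinity and $\Gamma_2$ contains a finite point, the paper writes $f$ as $\pm s \cdot {\rm CF}(\Gamma_2, \infty)$ plus a constant; when $\Gamma_1 = \{x_1, \dots, x_k\}$ consists of several points at infinity attached to segments $L_i$, it uses $\sum_{i=1}^{k} \pm s_i \cdot {\rm CF}(\Gamma \setminus e_i, \infty)$ with $e_i = L_i \sqcup \{x_i\}$; and when both $\Gamma_1$ and $\Gamma_2$ are points at infinity (the doubly infinite segment), it uses an integer combination of moves fired from a sublevel set $f^{-1}([f(x),\infty])$ and from a finite point $\{x\}$. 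Your proposal never contemplates firing from anything other than $\Gamma_1$ augmented by saturated segments, so the genuinely new phenomenon forced by unbounded edges --- the one this lemma exists to address --- is not handled.
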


\begin{proof}[Sketch of proof.]
We use the same notations as in their proof.
All we have to do is to show the construction for the case with $l = \infty$.
Especially, it is sufficient to check the case that $\Gamma_1$ consists only of points at infinity.
Supposing that $\Gamma_1$ has only one point gives only two situations.
Firstly, $\Gamma_2$ contains a finite point.
Then $f$ can be written as $\pm s \cdot {\rm CF}(\Gamma_2, \infty)$ plus a constant, where $s$ is the slope of $f$ on the complement.
Secondly, $\Gamma_2$ consists only of one point at infinity.
Taking a finite point $x$, then $f$ can be written as $\pm s \cdot ({\rm CF}(f^{-1}([f(x), \infty]), \infty) - {\rm CF}(\{x\}, \infty))$ plus a constant with same $s$ as the first situation.
Suppose that $\Gamma_1$ has plural points.
$\Gamma_2$ must contain at least one finite point.
Let $x_i$ be the intersection of $\Gamma_1$ and the closure of $L_i$.
Note that $\Gamma_1 = \{ x_1, \cdots, x_k \}$, where $k$ is no less than two.
With the slope $s_i$ of $f$ on $e_i := L_i \sqcup \{ x_i \}$, $f$ is $\sum_{i = 1}^{k}({\pm s_i \cdot {\rm CF}(\Gamma \setminus e_i, \infty)})$ plus a constant.
\end{proof}

The next lemma is proven in the same way of {\cite[Lemma 2]{Haase=Musiker=Yu}} and shows the appropriateness of this definition.

\begin{lemma}
Every rational function on a tropical curve is a linear combination of chip firing moves having integer coefficients (plus a constant).
\end{lemma}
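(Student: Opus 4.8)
The plan is to reduce the statement to the preceding lemma together with the bounded-case argument of \cite[Lemma 2]{Haase=Musiker=Yu}. Concretely, I would first show that an arbitrary $f \in \mathrm{Rat}(\Gamma)$ can be written as an ordinary finite sum of weighted chip firing moves (plus a constant), and then invoke the previous lemma, which rewrites every weighted chip firing move as a linear combination of chip firing moves with integer coefficients. Since an ordinary sum of $\mathbb{Z}$-linear combinations of chip firing moves is again a $\mathbb{Z}$-linear combination of chip firing moves (absorbing all additive shifts into the single trailing constant), chaining these two reductions yields the claim. Thus the backbone of the proof is the decomposition into weighted chip firing moves, and the only genuinely new work is to make that decomposition survive the presence of unbounded edges and of the values $\pm\infty$.

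For the decomposition itself I would fix a loopless model $(G,l)$ of $\Gamma$ adapted to $f$, that is, one on which $f$ is affine with integer slope on the relative interior of every edge; such a model exists because $f$ has only finitely many pieces. I would then run the induction of \cite[Lemma 2]{Haase=Musiker=Yu} on the number of edges of this model on which $f$ is nonconstant. In the base case $f$ is, up to an additive constant, a single chip firing move. In the inductive step one peels off a single weighted chip firing move whose two constancy-subgraphs are the loci where $f$ attains its extreme values among the finite points, arranged so that the complement of their union is a disjoint union of open segments on which $f$ is linear; subtracting this weighted chip firing move flattens those extreme pieces and strictly decreases the number of edges on which $f$ bends, exactly as in \cite{Haase=Musiker=Yu}, so the induction closes.

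The adaptation for unbounded edges is where the care is needed, and I expect it to be the main obstacle. When $f$ takes the value $+\infty$ or $-\infty$ at a point at infinity, the extremal constancy-subgraphs produced above can degenerate to collections of points at infinity, which is forbidden for the subgraphs defining a chip firing move in our corrected definition. To handle this I would strip off the infinite behaviour first, using chip firing moves with $l = \infty$ in precisely the form already established in the proof of the previous lemma: an outgoing slope $s$ of $f$ along an unbounded edge $e_i = L_i \sqcup \{x_i\}$ running out to a point at infinity is accounted for by a term of the shape $\pm s \cdot \mathrm{CF}(\Gamma \setminus e_i, \infty)$ (plus a constant), and the conventions for $\odot$ at points at infinity fixed in Section~2 guarantee that these combinations sum correctly wherever the value is $\pm\infty$. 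After subtracting these finitely many $l=\infty$ contributions, the residual function is finite at every finite point and has only finite critical values, so the bounded-edge induction of the previous paragraph applies verbatim. Assembling the $l=\infty$ terms, the weighted chip firing terms expanded through the preceding lemma, and a single constant then exhibits $f$ as the desired linear combination of chip firing moves with integer coefficients.
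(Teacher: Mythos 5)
Your proposal is correct and follows essentially the same route as the paper: the paper proves this lemma simply by running the argument of Haase--Musiker--Yu's Lemma 2 (decomposition of $f$ into weighted chip firing moves plus a constant) and then invoking the preceding lemma, which expands every weighted chip firing move---including the degenerate ones whose constancy subgraphs consist of points at infinity---into an integer linear combination of chip firing moves. Your variant of stripping the $\pm\infty$ behaviour first with the $l=\infty$ moves $\pm s\cdot\mathrm{CF}(\Gamma\setminus e_i,\infty)$ is only a reorganization of where that same adaptation is applied, not a different method.
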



A point on $\Gamma$ with valence two is said to be a {\it smooth} point.
We sometimes refer to an effective divisor $D$ on $\Gamma$ as a {\it chip configuration}.
We say that a subgraph $\Gamma_1$ of $\Gamma$ can {\it fire on} $D$ if for each boundary point of $\Gamma_1$ there are at least as many chips as the number of edges pointing out of $\Gamma_1$.
A set of points on a tropical curve $\Gamma$ is said to be {\it cut set} of $\Gamma$ if the complement of that set in $\Gamma$ is disconnected.


\section{Generators of $R(D)^K$}　

In this section, for an effective divisor $D$ on a tropical curve and a finite subgroup $K$ of the isometry group of the tropical curve, we find a generator set of the $K$-invariant set $R(D)^K$ of $R(D)$ and then, show that $R(D)^K$ is finitely generated as a tropical semimodule.
When $D$ is $K$-invariant, $R(D)/\boldsymbol{R}$ is identified with the $K$-invariant linear system $|D|^K$, so $|D|^K$ is finitely generated by the generators of $R(D)^K$ modulo tropical scaling.

\begin{rem3}[{\cite[Lemma 6]{Haase=Musiker=Yu}}]
	\label{R(D)の有限生成性}
Let $\widetilde{\Gamma}$ be a tropical curve, $\widetilde{D}$ be a divisor on $\widetilde{\Gamma}$ and $S$ be the set of rational functions $f$ in $R(\widetilde{D})$ such that the support of $\widetilde{D} + {\rm div}(f)$ does not contain any cut set of $\widetilde{\Gamma}$ consisting only of smooth points.
Then
\begin{itemize}
\item[$(1)$]
$S$ contains all the extremals of $R(\widetilde{D})$,
\item[$(2)$]
$S$ is finite modulo tropical scaling, and
\item[$(3)$]
$S$ generates $R(\widetilde{D})$ as a tropical semimodule.
\end{itemize}
\end{rem3}

Though in the above remark they assume that $R(\widetilde{D})$ is a subset of $\boldsymbol{R}^{\widetilde{\Gamma}}$, the proof is applied even in the case that $R(\widetilde{D})$ is a subset of $(\boldsymbol{R} \cup \{ \pm \infty \})^{\widetilde{\Gamma}}$ with preparations in Section 2.
Also, the above remark throws the relation between $S$ and $\widetilde{D}$ into relief, hence hereafter we write $S$ for $\widetilde{D}$ as $S(\widetilde{D})$.
Note that a tropical subsemimodule of $R(\widetilde{D})$ is not always finitely generated.
Consider the tropical subsemimodule of $R([0])$ corresponding to $|[0]| \setminus \{ [0] \}$ on a tropical curve $[0,1]$.

Let $\Gamma$ be a tropical curve, $D$ an effective divisor on $\Gamma$ and $K$ a subgroup of the isometry group of $\Gamma$.
One can expect the relation between $R(D)$ and $S(D)$ to be analogous to that of their $K$-invariant counterparts $R(D)^K$ and $S(D)^K$, but in vain.
Indeed, the next example objects.

\begin{ex3}
	\label{R(D)^KとS^Kの関係がR(D)とSのそれとは違う例}
Let $\widetilde{\Gamma}$ be a circle and let a map $i: \widetilde{G_0} \rightarrow \widetilde{G_0}$ which transfers two edges to each other, where $\widetilde{G_0}$ is the underlying graph of the canonical loopless model of $\widetilde{\Gamma}$.
For a point $x_1$ on $\widetilde{\Gamma}$, we choose another point $x_2$ on $\widetilde{\Gamma}$ such that $i(x_1) \ne x_2$.
For the group $\widetilde{K}$ generated by $i$ and the effective divisor $\widetilde{D} = x_1 + x_2$, although $S(\widetilde{D})^{\widetilde{K}}$ is empty, $R(\widetilde{D})^{\widetilde{K}}$ is not empty.
It means that $S(\widetilde{D})^{\widetilde{K}}$ is not a generator set of $R(\widetilde{D})^{\widetilde{K}}$.
\end{ex3}

Now, let us find a generator set for $R(D)^K$ that corresponds to $S(D)$ for $R(D)$.
In the above situation, $K$ acts on $\Gamma$ naturally.
We define $V_1(\Gamma)$ as the set of points $x$ on $\Gamma$ such that there exists a point $y$ in any neighborhood of $x$ whose stabilizer is not equal to that of $x$.


\begin{lemma3}
	\label{$V_1$は有限集合}
$V_1(\Gamma)$ is a finite set.
\end{lemma3}


\begin{proof}
We assume that $\Gamma$ is not the $\infty$-metric space obtained from only one edge with length of $\infty$.
Let $\sigma : \Gamma \rightarrow \Gamma$ be an isometry.
Then, for any edge $e$ of $\Gamma$, the image of $e$ by $\sigma$ agrees completely with $e$ or the intersection of $e$ and the image of $e$ by $\sigma$ is contained in the set of the endpoints of $e$.
In fact, if $|e \cap \sigma(e)|$ is infinite, then $\sigma(e)$ is contained in $e$ because $e$ is an edge of $\Gamma$.
It means that $\sigma(e) = e$.
If $|e \cap \sigma(e)|$ is finite and $e \cap \sigma(e)$ contains a point on $\Gamma$ other than endpoints of $e$, then that point has the valence of greater than two.
It contradicts to the fact that $e$ is an edge of $\Gamma$.

From the above argument, for any edge $e$ of $\Gamma$, we can roughly classify the situations into four.
First, $\sigma$ is the identity map on $e$, i.e., $\sigma$ fixes all points on $e$.
Second, $\sigma$ gives a mirror image of $e$.
In this case, if $\Gamma$ is a circle consisting of $e$, the fixed points on $e$ by $\sigma$ are only antipodal points on the axis of symmetry of $\sigma$, otherwise, the midpoint of $e$ is fixed by $\sigma$, moreover when $e$ is a loop, then the vertex connected to $e$ is also fixed by $\sigma$.
Third, $\sigma$ acts as a proper rotation on $e$.
This is possible only when $\Gamma$ is a circle, and $\sigma$ gives no fixed points on $e$.
Finally, $\sigma$ maps $e$ onto other edge of $\Gamma$, then only the endpoints of $e$ may be fixed by $\sigma$.

Consequently, under the above assumption, since $K$ is a finite set and $\Gamma$ has finite vertices and edges, $V_1(\Gamma)$ is a finite set.

Let us suppose that $\Gamma$ is the $\infty$-metric space obtained from only one edge with length of $\infty$.
Since $K$ is a finite set, any $\sigma$ in $K$ is not a proper translation of $\Gamma$.
Each isometry of $\Gamma$ other than translations fixes only one point on $\Gamma$.
Thus, also in this case, $V_1(\Gamma)$ is a finite set.
Note that there can exists only one inversion.
If there were two distinct, these two can generate a translation, leading $|K|$ to infinity.
\end{proof}

We set $(G_0, l_0)$ as the canonical loopless model of $\Gamma$.
By Lemma \ref{$V_1$は有限集合}, we obtain the model $(\widetilde{G_1}, \widetilde{l_1})$ of $\Gamma$ by setting the $K$-orbit of the union of $V(G_0)$ and $V_1(\Gamma)$ as the set of vertices $V(\widetilde{G_1})$.
Naturally, we can regard that $K$ acts on $V(\widetilde{G_1})$ and also on $E(\widetilde{G_1})$.
Thus, the sets $V(\widetilde{G^{\prime}})$ and $E(\widetilde{G^{\prime}})$ are defined as the quotient sets of $V(\widetilde{G_1})$ and $E(\widetilde{G_1})$ by $K$, respectively.
Let $\widetilde{G^{\prime}}$ be the graph obtained by setting  $V(\widetilde{G^{\prime}})$ as the set of vertices and $E(\widetilde{G^{\prime}})$ as the set of edges.
Since $\widetilde{G_1}$ is connected, $\widetilde{G^{\prime}}$ is also connected.
We obtain the loopless graph $G^{\prime}$ from $\widetilde{G^{\prime}}$ and the loopless model $(G_1, l_1)$ of $\Gamma$ from the inverse image of $V(G^{\prime})$ by the map defined by $K$.
Note that $V(G_1)$ contains $V(\widetilde{G_1})$.
Since $K$ is a finite subgroup of the isometry group of $\Gamma$, the length function $l^{\prime} : E(G^{\prime}) \rightarrow {\boldsymbol{R}}_{> 0} \cup \{ \infty \}$, $[e] \mapsto |K_e| \cdot l_1(e)$ is well-defined, where $[e]$ and $K_e$ mean the equivalence class of $e$ and the stabilizer of $e$, respectively.
Let $\Gamma^{\prime}$ be the tropical curve obtained from $(G^{\prime}, l^{\prime})$.
Then, $\Gamma^{\prime}$ is the quotient tropical curve of $\Gamma$ by $K$.

For any edge $e$ of $G_1$, by the Orbit-Stabilizer formula, $|K_e|$ is a positive integer.
Thus, for $(G_1, l_1)$ and $(G^{\prime}, l^{\prime})$, there exists only one morphism $\varphi : \Gamma \rightarrow \Gamma^{\prime}$ that satisfies ${\rm deg}_e(\varphi) = |K_e|$ for any edge $e$ of $G_1$.



We obtain the following lemma as an extension of {\cite[Lemma 2.2]{Chan}}.

\begin{lemma3}
If both $\Gamma$ and $\Gamma^{\prime}$ are not singletons, then $\varphi$ is a finite harmonic morphism of degree $|K|$.
\end{lemma3}

\begin{proof}
Clearly, $\varphi$ is finite.
Now we check that $\varphi$ is harmonic and its degree is $|K|$.
Since $K$ is a finite subgroup of the isometry group of $\Gamma$, for any point $x$ on $\Gamma$ and any half-edge $h^{\prime}$ of $\varphi(x)$, each connected component of $\varphi^{-1}(h^{\prime})$ has the same dilation factor ${\rm deg}_h(\varphi)$, where $h$ is a connected component emanating from $x$.
Therefore, for the edge $e$ of $G_1$ containing $h$ and its image $e^{\prime}$ by $\varphi$, the following hold:
\[
{\rm deg}_x(\varphi) = \sum_{\widetilde{h} \mapsto h^{\prime}}{\rm deg}_h(\varphi) = \sum_{\widetilde{e} \mapsto e^{\prime}}{\rm deg}_e(\varphi) =  |Ke| \cdot |K_e| = |K|.
\]
Where $\widetilde{h}$, $\widetilde{e}$ and $Ke$ denote a connected component of $\varphi^{-1}(h^{\prime})$, that of $\varphi^{-1}(e^{\prime})$ and the orbit of $e$ by $K$, respectively.
Note that we use the Orbit-Stabilizer formula at the last equality.
Accordingly, we get the conclusion.
\end{proof}

Note that whether $\Gamma$ is a singleton or not agrees with whether $\Gamma^{\prime}$ is a singleton.

Is $R(D)^K$, the $K$-invariant set of $R(D)$, identical to $\varphi^{\ast}(R(\varphi_{\ast}(D)))$?
Nor is it.

\begin{ex3}
Assume the situation of Example \ref{R(D)^KとS^Kの関係がR(D)とSのそれとは違う例}.
For a rational function $f$ which decreases from $\varphi(x_1)$ to $\varphi(x_2)$ with slope one and is constant on other graph, however $f$ is an element of $R(\varphi_{\ast}(\widetilde{D}))$, the pull-back of $f$ by $\varphi$ is not in $R(\widetilde{D})^{\widetilde{K}}$.
\end{ex3}

Next, for $R(D)^K$, the following holds.

\begin{lemma3}
$R(D)^K$ is a tropical semimodule.
\end{lemma3}

\begin{proof}
Let $c$ be in $\boldsymbol{R}$, $f, g$ in $R(D)^K$ and $\sigma$ in $K$.
Since $R(D)$ is a tropical semimodule by Lemma \ref{R(D)はトロピカル半加群}, $c \odot f$ and $f \oplus g$ are in $R(D)$.
It is obvious that $\odot$ and $\circ$ are associative and that $\circ$ is distributive over $\oplus$ from right, both $(c \odot f) \circ \sigma$ and $(f \oplus g) \circ \sigma$ are in $R(D)^K$.
\end{proof}


Note that $R(D + {\rm div}(f))^K = R(D)^K \odot (-f)$ for any $K$-invariant rational function $f$.





The following lemma is an extension of {\cite[Lemma 5]{Haase=Musiker=Yu}}.

\begin{lemma3}
	\label{extremalであるための必要十分条件}
Let $f$ be in ${\rm Rat}(\Gamma)$.
Then, $f$ is an extremal of $R(D)^K$ if and only if there are not two proper $K$-invariant subgraphs $\Gamma_1$ and $\Gamma_2$ covering $\Gamma$ such that each can fire on $D + {\rm div}(f)$.
\end{lemma3}

\begin{proof}
First, let us show the ``if" part.
Suppose that there are two such subgraphs $\Gamma_1$ and $\Gamma_2$.
We can assume that each $\Gamma_i$ does not have any connected component consisting only of points at infinity.
Each $\Gamma_i$ defines a chip firing move $g_i$ for a small positive number so that $g_i$ is zero on $\Gamma_i$ and they are nonpositive.
As $\Gamma_1$ and $\Gamma_2$ are $K$-invariant, so $g_1$ and $g_2$ are in $R(D + {\rm div}(f))^K$.
Since $g_1 \oplus g_2 = 0$ on $\Gamma$, we can write $f$ as $(f + g_1) \oplus (f + g_2)$, i.e. $f$ is not an extremal of $R(D)^K$.

Next, let us show the ``only if" part.
Suppose $f = g_1 \oplus g_2$ for some $g_1$ and $g_2$ in $R(D)^K \setminus \{ f \}$.
For $i = 1,2$, there exists $\widetilde{g_i}$ in $R(D + {\rm div}(f))^K$ such that $g_i = \widetilde{g_i} \odot f$.
Let $\Gamma_i$ be the closure of the loci where $\widetilde{g_i} = 0$.
Then, the union of $\Gamma_1$ and $\Gamma_2$ is $\Gamma$ and each $\Gamma_i$ is proper.
Since $\widetilde{g_i}$ is $K$-invariant, so is $\Gamma_i$.
Then, each $\Gamma_i$ can fire on $D + {\rm div}(f)$.
\end{proof}

The term ``a subgraph is infinite'' means that the subgraph is a infinite set.

\begin{lemma3}
	\label{部分グラフの存在条件}
Let $A$ be a $K$-invariant subset of ${\rm supp}(D)$.
If $\varphi(A)$ is a cut set of $\Gamma^{\prime}$ and $D(x) \ge {\rm val}(x) - 1$ for any $x$ in $A$, then there exists a $K$-invariant infinite subgraph $\Gamma_1$ of $\Gamma$ which can fire on $D$ and whose boundary points are in $A$.
\end{lemma3}

\begin{proof}
For such $A$, let $\Gamma^{\prime}_1, \cdots, \Gamma^{\prime}_n$ be distinct connected components of $\Gamma^{\prime} \setminus \varphi(A)$ respectively.
Note that $n$ is no less than two since $\varphi(A)$ is a cut set of $\Gamma^{\prime}$.
Clearly, for any $i$, the inverse image of the closure of $\Gamma^{\prime}_i$ by $\varphi$ is a $K$-invariant infinite subgraph of $\Gamma$ we want.
\end{proof}

We call a point on $\Gamma$ not being a vertex of $G_1$ a {\it $K$-ordinary point}.
Note that if a subgraph of $\Gamma$ has a $K$-ordinary point, topologically saying, it should have infinite points.

\begin{lemma3}
	\label{部分グラフの性質}
Let $\Gamma_1$ be a $K$-invariant subgraph of $\Gamma$.
If $\Gamma_1$ is infinite and if the set of its boundary points $\partial \Gamma_1$ contains at least one $K$-ordinary point, then $\varphi(\partial \Gamma_1)$ is a cut set of $\Gamma^{\prime}$ and contains a point on $\Gamma^{\prime}$ not being a vertex of $G^{\prime}$. 
\end{lemma3}

\begin{proof}
For such $\Gamma_1$, obviously $\varphi(\partial \Gamma_1)$ contains a point on $\Gamma^{\prime}$ not being a vertex of $G^{\prime}$.
It is sufficient to check that $\varphi(\partial \Gamma_1)$ is a cut set of $\Gamma^{\prime}$.
Let $\Gamma_2$ be the closure of the complement set of $\Gamma_1$ in $\Gamma$.
This $\Gamma_2$ is $K$-invariant and contains a $K$-ordinary point.
Thus, $\Gamma_2$ is an infinite subgraph.
Consequently, $\Gamma^{\prime} \setminus \varphi(\partial\Gamma_1) = \varphi(\Gamma_1 \cup \Gamma_2) \setminus \varphi(\partial\Gamma_1) = (\varphi(\Gamma_1) \setminus \varphi(\partial\Gamma_1)) \sqcup (\varphi(\Gamma_2) \setminus \varphi(\partial\Gamma_1))$.
Hence, $\varphi(\partial \Gamma_1)$ is a cut set of $\Gamma^{\prime}$.
\end{proof}


The next corollary follows from Lemma \ref{部分グラフの存在条件} and Lemma \ref	{部分グラフの性質}.

\begin{cor3}
	\label{K通常点が動くための条件}
For a subset of the support of $\varphi_\ast(D)$, we consider the following condition $(\ast):$

\begin{itemize}
\item[$(\ast)$]
it is a cut set of $\Gamma^{\prime}$ containing no vertices of $G^{\prime}$ and whose inverse image by $\varphi$ is a subset of the support of $D$.

\item[$(1)$]
For a subset $A$ of ${\rm supp}(D)$ whose image by $\varphi$ satisfies $(\ast)$, there exists a $K$-invariant infinite subgraph $\Gamma_1$ of $\Gamma$ which can fire on $D$ and whose boundary points are in $A$.

\item[$(2)$]
Let $\Gamma_1$ be a $K$-invariant subgraph of $\Gamma$.
If $\Gamma_1$ is infinite and can fire on $D$ and if the set of its boundary points consists only of $K$-ordinary points, then the image of the set of boundary points of $\Gamma_1$ by $\varphi$ satisfies $(\ast)$.
\end{itemize}
\end{cor3}



By Corollary \ref{K通常点が動くための条件}, it is natural to define $S(D)_K$ as the set of $f$ in $R(D)^K$ such that there exist no cut sets of $\Gamma^{\prime}$ contained in the support of $\varphi_\ast(D + {\rm div}(f))$, containing no vertices of $G^{\prime}$ and whose inverse image by $\varphi$ is a subset of the support of $D + {\rm div}(f)$.
In fact, this $S(D)_K$ is the set corresponding to $S(D)$, i.e. $S(D)_K$ is a generator set of $R(D)^K$.

\begin{thm3}
	\label{main theorem}
In the above situation, the following hold:

\begin{itemize}
\item[$(1)$]
$S(D)_K$ contains all the extremals of $R(D)^K$,

\item[$(2)$]
$S(D)_K$ is finite modulo tropical scaling, and

\item[$(3)$]
$S(D)_K$ generates $R(D)^K$ as a tropical semimodule.
\end{itemize}
\end{thm3}

\begin{proof}
$(1)$ Suppose $f$ is in the difference set of $R(D)^K$ from $S(D)_K$, then there exists a cut set $A^{\prime}$ of $\Gamma^{\prime}$ contained in ${\rm supp}(\varphi_\ast(D + {\rm div}(f)))$, containing no vertices of $G^{\prime}$ and such that $\varphi^{-1}(A^{\prime}) \subset {\rm supp}(D + {\rm div}(f))$.
By $(1)$ of Corollary \ref{K通常点が動くための条件}, there exists a $K$-invariant infinite subgraph $\Gamma_1$ of $\Gamma$ which can fire on $D + {\rm div}(f)$ and whose boundary points are in $\varphi^{-1}(A^{\prime})$.
Then, the closure of $\Gamma \setminus \Gamma_1$ can also fire on $D + {\rm div}(f)$.
Therefore, by Lemma \ref{extremalであるための必要十分条件}, $f$ is not an extremal of $R(D)^K$.

$(2)$ The push-forward of a rational function on $\Gamma$ induces a natural map $S(D)_K / \boldsymbol{R} \rightarrow S(\varphi_\ast (D)) / \boldsymbol{R}$, $[f] \mapsto [\varphi_\ast(f)]$.
In fact, for any $f$ in $S(D)_K$, $\varphi_\ast (D + {\rm div}(f)) = \varphi_\ast(D) + \varphi_\ast({\rm div}(f)) = \varphi_\ast(D) + {\rm div}(\varphi_\ast(f))$, thus, $\varphi_\ast(f)$ is in $R(\varphi_\ast(D))$.
From $f \in S(D)_K$, there exist no cut sets of $\Gamma^{\prime}$ contained in ${\rm supp}(\varphi_\ast (D) + \varphi_\ast ({\rm div}(f)))$, containing no vertices of $G^{\prime}$ and whose inverse image by $\varphi$ is a subset of ${\rm supp}(D + {\rm div}(f))$.
This means that $\varphi_\ast(f)$ is in $S(\varphi_\ast (D))$.
Also, for any pair of $f_1$ and $f_2$ in $[f]$, there exists $c$ in $\boldsymbol{R}$ satisfying $f_2 = f_1 + c$.
Since $\varphi_{\ast}(f_2) = \varphi_{\ast}(f_1 + c) =\varphi_{\ast}(f_1) + \varphi_{\ast}(c) = \varphi_{\ast}(f_1) + c$, the map is well-defined.
Now we show that the map is injective.
Let $[f]$ and $[g]$ be distinct elements of $S(D)_K / \boldsymbol{R}$, thus ${\rm div}(f)$ differs from ${\rm div}(g)$.
Since both $f$ and $g$ are $K$-invariant, so their images $\varphi_\ast({\rm div}(f))$ and $\varphi_\ast({\rm div}(g))$ are different, i.e. the map is injective.
By Remark \ref{R(D)の有限生成性}, we get the conclusion.

$(3)$ Suppose $f \in R(D)^K$.
Let $N(f)$ be the number of distinct $K$-orbits in the union of  all $K$-invariant subsets of ${\rm supp}(D + {\rm div}(f))$ which is a cut set of $\Gamma^{\prime}$ containing no vertices of $G^{\prime}$.
We prove $(3)$ by induction for $N(f)$.
If $N(f) = 0$, then $f \in S(D)_K$ from the definition of $S(D)_K$.
Assume that $f \in \langle S(D)_K \rangle$ for all $N(f) \le k$, where $\langle S(D)_K \rangle$ means the tropical semimodule generated by $S(D)_K$.
We consider the case where $N(f) = k + 1$ and $f \notin S(D)_K$.
Let $A$ be a subset of ${\rm supp}(D + {\rm div}(f))$ whose image by $\varphi$ is a cut set of $\Gamma^{\prime}$ containing no vertices of $G^{\prime}$.
By $(1)$ of Corollary \ref{K通常点が動くための条件}, there exists a $K$-invariant subgraph $\Gamma_1$ of $\Gamma$ which can fire on $D + {\rm div}(f)$ and whose boundary points are in $A$.
Let $\Gamma_2$ be the closure of the complement of $\Gamma_1$ in $\Gamma$.
For any $x \in \partial \Gamma_i$, we write the distance between $x$ and its closest vertex of $G_1$ as $l_{x_i}$.
Let $l_i := {\rm min}\{ l_{x_i} | x \in \partial \Gamma_i \}$ and $g_i := {\rm CF}(\Gamma_i, l_i)$.
Then, for both $i = 1, 2$, $f \odot g_i$ is not equal to $f$ and is in $R(D)^K$ since $f, g_i \in R(D)^K$ and $f = (f \odot g_1) \oplus (f \odot g_2)$.
By the definition of $g_i$, $N(f) > N(f \odot g_i)$ and $f \odot g_i \in \langle S(D)_K \rangle$, then $f \in \langle S(D)_K \rangle$.
\end{proof}

By Lemma 	\ref{有限生成トロピカル半加群はextremalで生成される} and the above theorem, we obtain the following corollary, which is an extension of {\cite[Corollary 9]{Haase=Musiker=Yu}}.

\begin{cor3}
Let $\Gamma$ be a tropical curve, $D$ an effective divisor on $\Gamma$ and $K$ a finite subgroup of the isometry group of $\Gamma$.
Then, the tropical semimodule $R(D)^K$ is generated by the extremals of $R(D)^K$.
This generating set is minimal and unique up to tropical scalar multiplication.
\end{cor3}

If $D$ is $K$-invariant, $R(D)^K/\boldsymbol{R}$ is naturally identified with the $K$-invariant linear subsystem $|D|^K$.
In conclusion, the following statement holds from Theorem \ref{main theorem}.

\begin{thm3}
Let $\Gamma$ be a tropical curve, $D$ an effective divisor on $\Gamma$ and $K$ a finite subgroup of the isometry group of $\Gamma$.
If $D$ is $K$-invariant, then the $K$-invariant linear subsystem $|D|^K$ of $|D|$ is finitely generated by $S(D)_K/\boldsymbol{R}$.
\end{thm3}






\end{document}